\documentclass[journal,twoside,web]{ieee}

\usepackage{amsthm}

\usepackage{generic}
\usepackage{cite}
\usepackage{amsmath,amssymb,amsfonts}
\allowdisplaybreaks
\usepackage{algorithmic}
\usepackage{graphicx}
\usepackage{textcomp}
\usepackage{lipsum}
\usepackage{stfloats}
\usepackage{hyperref}
\PassOptionsToPackage{dvipsnames,svgnames,table}{xcolor}

\usepackage{enumitem}  
\usepackage{microtype}
\usepackage{booktabs}
\usepackage{bigints}
\usepackage{mathrsfs}
\usepackage{mathtools}

\usepackage{standalone}

\usepackage{color}

\usepackage{multirow}
\usepackage{upgreek}
\usepackage{accents}



\newcommand{\ov}{\xbar}

\newcommand{\T}{\top}

\newtheorem{remark}{Remark}{}

\newtheorem{thm}{Theorem}

\newtheorem{lem}{Lemma}
\newtheorem{defin}{Definition}
\newtheorem{assump}{Assumption}

\newcommand*\xbar[1]{%
   \hbox{%
     \vbox{%
       \hrule height 0.7pt 
       \kern0.35ex
       \hbox{%
         \kern-0.1em
         \ensuremath{#1}%
         \kern-0.1em
       }%
     }%
   }%
} 


\begin{document}
\title{Dissipativity-based $\mathcal{L}_2$ gain-scheduled static
output feedback design for rational LPV systems}
\author{Valessa V. Viana, Diego de S. Madeira, and Thiago Alves Lima
\thanks{Valessa V. Viana, Diego de S. Madeira, and Thiago Alves Lima are with the Departamento de Engenharia Elétrica, Universidade Federal do Ceará, Fortaleza, Brazil. }}

\maketitle

\begin{abstract}
This paper proposes the design of gain-scheduled static output feedback controllers for the stabilization of continuous-time linear parameter-varying systems with $\mathcal{L}_2$-gain performance. The system is transformed into the form of a differential-algebraic representation which allows dealing with the broad class of systems whose matrices can present rational or polynomial dependence on the parameter. The proposed approach uses the definition of strict QSR-dissipativity, Finsler’s Lemma, and the notion of linear annihilators to formulate conditions expressed in the form of polytopic linear matrix inequalities for determining the gain-scheduled static output feedback control for system stabilization. One of the main advantages of the strategy is that it provides a simple design solution in a non-interactive manner. Furthermore, no restriction on the plant output matrix is imposed. Numerical examples highlight the effectiveness of the proposed method.
\end{abstract}

\begin{IEEEkeywords}
Linear parameter-varying systems, gain-scheduling, static output feedback, dissipativity, differential-algebraic representation.  
\end{IEEEkeywords}

\section{Introduction}
\label{intro}

Static output feedback (SOF) design is a very important problem in control theory. In some cases, a feedback controller that uses all system state information can not be applied due to the impossibility of measuring all the states of the system. Then, an output feedback controller using only the available states has to be designed \cite{sadabadi2016static}. The SOF design is a challenging problem since its mathematical formulation leads to non-convex conditions which can not be solved by semidefinite programming (SDP) \cite{trof2}.  Even though there are many strategies that provide solutions to this problem, it is known that a definitive solution is yet to appear \cite{sadabadi2016static}. See \cite{vese1,trof2,gahinet2011structured,sadabadi2016static} for an overview of the subject.

In the control of linear parameter-varying (LPV) systems, the gain-scheduling technique has received significant attention in the last decades \cite{wei2014survey,apkarian1995self}. The gain-scheduling approach is based on the measurement of the time-varying parameter which adjusts the controller gains for the complete range of parameter variation \cite{shamma1991guaranteed,rugh2000research}. It is well known that a gain-scheduling approach provides less conservative results for LPV systems compared with others control methods \cite{apkarian1995self,shamma1991guaranteed,peres2005}.

In the literature, there are few works that use SOF gain-scheduling techniques for LPV systems. Methods for the gain-scheduled SOF (GS-SOF) design for continuous-time LPV systems have been published in \cite{al2018static,nguyen2018gain}. Recently, a GS-SOF design for LPV systems was developed in a two stage method, where it is necessary designing a non-scheduled static feedback in the first stage \cite{sereni2019new}. In \cite{behrouz2021robust}, a GS-SOF non-iterative design procedure with \(\mathcal{H}_2/\mathcal{H}_{\infty}\) performance has been developed. However, as it is common in the field, these strategies consider the polytopic approach, then the LPV system can only be affine on the parameter. Few works consider a polynomial or rational dependence on the parameter. A paper considering this type of dependence is \cite{bouali2007new}, where a state feedback design was developed for rational LPV systems. In \cite{bouali2008}, gain-scheduled dynamic output feedback design with \(\mathcal{H}_2\) performance has been proposed for rational LPV systems. In \cite{masubuchi2008gain}, a procedure for designing dynamic gain-scheduled controllers for rational LPV systems in the descriptor form was developed. Recently, \cite{polcz2020induced} proposes a novel method to compute the \(\mathcal{L}_2\)-gain for rational LPV systems, however no control law is designed. Many solutions to the gain-scheduled static output feedback design for discrete-time LPV systems have also been recently developed \cite{sadeghzadeh2017gain,de2010gain,peixoto2020,peixoto2021improved}. However, none of them consider rational dependence on the parameter

 Dissipativity theory was introduced some decades ago and has been extensively used in stability analysis and control systems design \cite{willm2,brogl1}. Recently, \cite{madeira2021} proved, under mild assumptions, that a specific case of dissipativity called strict QSR-dissipativy is a necessary and sufficient condition for SOF stabilizabity of LTI systems. In \cite{madeira2020application}, the same concept of dissipativity has been used to the linear SOF design for uncertain nonlinear systems. 

In this paper, we develop a strategy based on some ideas presented in \cite{madeira2021}.
Our strategy provides sufficient conditions for the design of a gain-scheduled SOF that stabilizes continuous-time LPV systems with rational or polynomial dependence. Furthermore, we also consider the influence of external signals on the system and propose the stabilization with \(\mathcal{L}_2\)-gain performance.
The strategy here uses strict QSR-dissipativity, Finsler's Lemma and a differential algebraic representation (DAR) for the LPV system to obtain polytopic LMI conditions for the SOF design. The main contribution of this paper is to consider others types of dependencies on the parameter, such as rational, or polynomial, since most works only consider an affine dependence. It is important to highlight that no restriction on the output plant matrix is imposed. Finally, differently from most papers dealing with SOF design, the proposed strategy is non-iterative and is solved in only one stage. 

This paper is organized as follows. In Section 2, we present important theoretical preliminaries for the formulation of developed conditions. In Section 3, the proposed strategy for gain scheduling SOF stabilization is introduced. Extension of the proposed method to the \(\mathcal{L}_2\)-gain performance case is realized within Section 4. In Section 5, some numerical examples are provided to illustrate the efficiency of the strategy. Finally, in Section 6, we have the conclusion of the paper.

 \noindent \textbf{Notation.} For a matrix \(H \in \mathbb{R}^{n \times m}\), \(H^\top \in \mathbb{R}^{m \times n}\) means its transpose. Operators \(H \succ 0\) and \(H \succeq 0\) mean that the symmetric matrix \(H\) is positive definite or positive semidefinite, respectively. \(He\{A\}\) stands for \(A+A^\top\). \textbf{1}, I, 0 and \(J\) denote all-ones, identity, null, and exchange matrices (i.e., anti-diagonal matrix with ones) of appropriate dimensions, which can be explicitly presented when relevant. For a symmetric block matrix, the symbol \(\star\) stands for the transpose of the blocks outside the main diagonal block. Additionally, for matrices \(A\) and \(H\), \(diag(A, H)\) corresponds to the block-diagonal matrix. \(\nabla\) represents the gradient function. \(\mathbb{R}^+\) denotes the set of elements \(\beta \in \mathbb{R}\) such that \(\beta \geq 0\). Finally, $\| f \|_2$ is used to denote the $l_2$ norm of $f(t): \mathbb{R}^+ \rightarrow \mathbb{R}^n $, given by $\sqrt{
 \int_{0}^{t} f^\top(\tau) f(\tau) d\tau}$. 

\section{Preliminaries}
\subsection{LPV systems}
Consider an LPV system of the form
\begin{align}
    \begin{cases}\label{LPVsystem}
  \dot{x}(t)=A(\rho)x(t)+B(\rho)u(t),\\
  y(t)=C(\rho)x(t),
    \end{cases}
\end{align}
where \(x(t) \in \mathbb{R}^{n}\) is the state vector, \(u(t)\in \mathbb{R}^{m}\) is the control input, \(y(t)\in \mathbb{R}^{p}\) is the measured output. Moreover, \(\rho \in \Omega \subset \mathbb{R}^r\) is a vector of time-varying parameters and \(A(\rho)\in \mathbb{R}^{n \times  n}, B(\rho)\in \mathbb{R}^{n \times  m}, C(\rho)\in \mathbb{R}^{p \times  n}\) are polynomial or rational matrices on \(\rho\).
\begin{assump}
The elements of the parameters vector are bounded and the vector \(\rho\) lies inside a polytope \(\Omega\) of \(N=2^r\) vertices, where \(r\) is the number of elements of \(\rho\). The polytope \(\Omega\) is given by
\begin{eqnarray} \label{eq:polytope} 
\hspace{-0.2cm}&\Omega=\{\alpha(\rho(t)) \in \mathbb{R}^{N}:\sum \limits_{i=1}^{N} \alpha_i=1; \alpha_i \geq 0; i=1,\ldots,N\},
\end{eqnarray}
where any point inside \(\Omega\) can be represented by the convex combination of its vertices \cite{briat2014}.

\end{assump}

\subsection{Differential-Algebraic Representation - DAR}
The LPV system \eqref{LPVsystem} can be described by a Differential Algebraic Representation (DAR), as presented in \cite{coutinho2009robust}. A Differential Algebraic Representation is given by 
\begin{align}
    \begin{cases}\label{DAR}
\dot{x}=A_1x + A_2 \pi + A_3u,\\
y=C_1x+C_2\pi,\\
0=\Upsilon_1(\rho)x+\Upsilon_2(\rho)\pi+\Upsilon_3(\rho)u,
    \end{cases}
\end{align}
where \(\pi(x,\rho,u) \in \mathbb{R}^{n_\pi}\) is an auxiliary vector that contains all nonlinear terms of \eqref{LPVsystem} depending on \(\rho\). \(A_1 \in \mathbb{R}^{n \times n},A_2\in \mathbb{R}^{n \times n_{\pi}},A_3\in \mathbb{R}^{n \times m},C_1\in \mathbb{R}^{p \times n},C_2\in \mathbb{R}^{p \times n_{\pi}}\) are constant matrices and \(\Upsilon_1(\rho) \in \mathbb{R}^{n_\pi \times n},~\Upsilon_2(\rho) \in \mathbb{R}^{n_\pi \times n_\pi},~\Upsilon_3(\rho) \in \mathbb{R}^{n_\pi \times m}\) are affine matrices of \(\rho\).

The DAR of a system is not unique and a state-space
representation \eqref{LPVsystem} is well-posed in its DAR form if \(\Upsilon_2(\rho)\) is invertible since from \eqref{DAR} we have
\begin{eqnarray}\label{eq:systeminvertible}
&\pi(x,u,\rho)=\Upsilon_2^{-1}[\Upsilon_1x-\Upsilon_3u],\\
&\dot{x}=(A_1-A_2\Upsilon_2^{-1}\Upsilon_1)x+(A_3-A_2\Upsilon_2^{-1}\Upsilon_3)u.
\end{eqnarray}
\begin{remark}
The DAR \eqref{DAR} is an alternative and exact representation of system \eqref{LPVsystem}. It is important to highlight that it can model the whole class of LPV systems with rational dependence on the parameters without singularities at the origin \cite{coutinho2009robust}.
A general procedure to obtain the DAR of the LPV system can be found in \cite{coutinho2009robust}.
\end{remark}

The motivation to represent the LPV system in a DAR form is that, in \eqref{DAR} the system matrices \(A_i\) and \(C_i\) are constant and the dependency on \(\rho\) is transferred to the auxiliary matrices \(\Upsilon_i(\rho)\). Moreover, the auxiliary matrices depend only affinely on \(\rho\), which allows the use of techniques leading to convex design conditions expressed in the form of LMIs in this work.

\subsection{Finsler's Lemma}

A version of Finsler's Lemma from \cite{trofi2} is presented in the following Lemma.
\begin{lem} \label{finslerlemma}
Consider \(\mathcal{W} \subseteq \mathbb{R}^{n_s}\) a given polytopic set, and let \(Q_d:\mathcal{W} \rightarrow \mathbb{R}^{n_q \times n_q}\) and \(C_d:\mathcal{W} \rightarrow \mathbb{R}^{n_r \times n_q}\) be given matrix functions, with \(Q_d\) symmetric. Then, the following statements are equivalent 
\renewcommand{\theenumi}{\roman{enumi}}%
\begin{enumerate}
    \item \(\forall w \in \mathcal{W}\) the condition that \(z^{\top}Q_d(w)z>0\) is satisfied \(\forall z \in \mathbb{R}^{n_q}:C_d(w)z=0\). 
    \item \(\forall w \in \mathcal{W}\) there exists a certain matrix function \(L:\mathcal{W} \rightarrow \mathbb{R}^{n_q \times n_r}\) such that \(Q_d(w)+L(w)C_d(w)+C_d(w)^{\top}L(w)^{\top}\succ 0\).
\end{enumerate}
\end{lem}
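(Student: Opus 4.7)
The plan is to prove the equivalence pointwise in $w$, since both statements are universally quantified over $w \in \mathcal{W}$ with no coupling across different values of $w$. The polytopic structure of $\mathcal{W}$ is not actually needed for the equivalence itself; it matters only for subsequent LMI formulations in the paper. Fix $w \in \mathcal{W}$ and write $Q := Q_d(w)$, $C := C_d(w)$ for brevity.

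The direction (ii) $\Rightarrow$ (i) is essentially immediate. Given $L(w)$ satisfying $Q + LC + C^\top L^\top \succ 0$, for any nonzero $z$ with $Cz = 0$ a simple pre/post multiplication yields
\begin{equation*}
z^\top Q z = z^\top\bigl(Q + LC + C^\top L^\top\bigr) z - z^\top L (Cz) - (Cz)^\top L^\top z > 0,
\end{equation*}
since the last two subtracted terms vanish.

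For (i) $\Rightarrow$ (ii), the strategy I would follow is the standard one: show that for $\sigma > 0$ large enough, the augmented matrix $M(\sigma) := Q + \sigma C^\top C$ is positive definite, and then take $L(w) := (\sigma/2)\, C^\top$, so that $LC + C^\top L^\top = \sigma C^\top C$ and consequently $Q + LC + C^\top L^\top = M(\sigma) \succ 0$. To establish the existence of a suitable $\sigma$, I would argue by contradiction: if no such $\sigma$ existed, then for each integer $k$ there would be a unit vector $z_k$ with $z_k^\top Q z_k + k\|C z_k\|^2 \le 0$. By compactness of the unit sphere, a subsequence $z_k \to z^*$ with $\|z^*\|=1$ exists. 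Since $z_k^\top Q z_k$ is bounded, the inequality forces $\|C z_k\|^2 \to 0$, hence $C z^* = 0$; passing to the limit then gives $(z^*)^\top Q z^* \le 0$, contradicting hypothesis (i).

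The main obstacle is this one implication, (i) $\Rightarrow$ (ii), and specifically the quantitative step of exhibiting a concrete $L$. The compactness argument takes care of existence of a sufficiently large $\sigma$ for each fixed $w$, and the explicit choice $L(w) = (\sigma(w)/2)\, C_d(w)^\top$ defines the required matrix function on $\mathcal{W}$, with no regularity in $w$ demanded by the statement. Note that because the equivalence is established pointwise, one does not need to worry about a common $\sigma$ working uniformly over $\mathcal{W}$, which would otherwise require extra compactness and continuity assumptions on $Q_d$ and $C_d$.
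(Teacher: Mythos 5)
Your proof is correct. Note, however, that the paper itself does not prove this lemma: it is quoted as a known version of Finsler's Lemma from the cited reference, so there is no in-paper argument to compare against. Your two directions are both sound: (ii) $\Rightarrow$ (i) by restricting the positive definite form to the kernel of $C_d(w)$, and (i) $\Rightarrow$ (ii) via the classical choice $L(w)=(\sigma/2)C_d(w)^\top$ together with the compactness/contradiction argument showing that $Q_d(w)+\sigma C_d(w)^\top C_d(w)\succ 0$ for $\sigma$ large enough. You are also right on the two points that are easy to get wrong here: the polytopic structure of $\mathcal{W}$ plays no role in the equivalence itself (it only matters later, when $L$ is forced to be constant and $Q_d$, $C_d$ affine so that (ii) relaxes to a finite set of vertex LMIs, which is exactly how the paper uses the lemma), and the pointwise construction means no uniform $\sigma$ over $\mathcal{W}$ is needed since $L$ is allowed to depend on $w$. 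One cosmetic remark: statement (i) as written literally fails at $z=0$; the intended (and standard) reading is $z\neq 0$ in the kernel of $C_d(w)$, which is what your argument uses.
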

If \(C_d\) and \(Q_d\) are affine functions of \(w\),  and \(L\) is a constant matrix to be determined, then \(ii)\) becomes a polytopic LMI condition which is sufficient for \(i)\). Lemma \ref{finslerlemma} also applies for testing negative definite functions. Clearly, \(C_d\) is an annihilator of the vector \(z\), which is not unique. Further details and a systematic procedure for determining linear annihilators are presented in \cite{trofi2} and \cite{coutinho2008}. 

\subsection{Dissipativity}
Consider an LTI system such as
\begin{align}
    \begin{cases}\label{nonlinearsystem}
\dot{x}(t)=Ax(t)+Bu(t),\\
y(t)=Cx(t).
    \end{cases}
\end{align}
System \eqref{nonlinearsystem} is said to be dissipative if it is completely reachable and there exists a nonnegative storage function \(V(x)\), where \(V: \mathbb{R}^n \rightarrow \mathbb{R}\) and \(V \in C^1\), and a locally integrable supply rate \(r(u(t),y(t))\) such that \(\dot{V} \leq r(u,y)\) \cite{haddad2011nonlinear}. Some definitions of dissipativity can be found in \cite{brogl1}. In this work, we use the definition of strict QSR-dissipativity given below.
\begin{defin}\label{defdissip}
A system is said to be strictly QSR-dissipative
along all possible trajectories of \eqref{nonlinearsystem} starting at \(x(0)\), for all \(t \geq 0\), if there exists \(T(x) > 0\) such that
\begin{equation}\label{dissipativityeq_geral}
  \dot{V}(x)+T(x) \leq y^{\top}Qy+2y^{\top}Su+u^{\top}Ru,
 \end{equation}
 where \(S \in \mathbb{R}^{p \times m}\) is real and \(Q \in \mathbb{R}^{p \times p},R \in \mathbb{R}^{m \times m}\) are real and symmetric.
 \end{defin}

 From a practical point of view, a dissipative system stores only a fraction of the energy supplied to it through \(r(u,y)\) and only a fraction of its stored energy \(V(x)\) can be delivered to its surroundings. Definition \ref{defdissip} can be related with Lyapunov stability. If a system is strictly QSR-dissipative with \(V(x) > 0\) and \(Q \preceq 0\), then the free system is asymptotically stable \cite{haddad2011nonlinear}.
 
 In this work, we consider quadratic Lyapunov functions 
\begin{equation}\label{Def_V}
    V(x)=x^{\top}Px,~~P \succ 0,
\end{equation}
 where \(P \in \mathbb{R}^{n \times n}\), and a quadratic \(\rho\)-parameter dependent function \(T(x,\rho)\) that can be defined in a polytopic domain
 \begin{equation}\label{Def_T}
T(x,\rho)=x^{\top}H(\rho)x,~~H(\rho)=\sum \limits_{i=1}^{N}  \alpha_i H_i,~H_i \succ 0,
 \end{equation}
where \(H_i \in \mathbb{R}^{n \times n}\). Also, we consider \(Q\) and \(S\) \(\rho\)-parameter dependent matrices in a polytopic domain,
\begin{eqnarray}\label{Def_QeS}
&Q(\rho)= \sum \limits_{i=1}^{N}  \alpha_i Q_i,~~S(\rho)= \sum \limits_{i=1}^{N}  \alpha_i S_i.
\end{eqnarray}
where \(Q_i \in \mathbb{R}^{p \times p}\) and \(S_i \in \mathbb{R}^{p \times m}\). Thus, considering \eqref{Def_V}-\eqref{Def_T}-\eqref{Def_QeS}, a version of the dissipativity condition \eqref{dissipativityeq_geral} for the case of LPV systems \eqref{LPVsystem} in a DAR form such as \eqref{DAR} is given by 
\begin{equation}
    \begin{gathered}\label{dissipativityeq_lpv}
  t_d(x,u,\rho)=\nabla V^{\top}[A_1x+A_2\pi+A_3u]\\
  +x^{\top}H(\rho)x-y^{\top}Q(\rho)y-2y^{\top}S(\rho)u-u^{\top}Ru \leq 0.
    \end{gathered}
\end{equation}
The system \eqref{LPVsystem} is said to be \textit{robust strictly QSR-dissipative} if \eqref{dissipativityeq_lpv} holds for all \(\rho \in \Omega\).

\section{GS-SOF Stabilization}

The strategy proposed in this work consists in connecting Lemma \ref{finslerlemma} and dissipativity condition \eqref{dissipativityeq_lpv}, assuming parameter dependent matrices on the supply rate and on function \(T\).  In order to apply Lemma \ref{finslerlemma}, we consider the following notation
 \begin{eqnarray}\label{s3:eq1} \nonumber
 &w=\rho(t),~\mathcal{W}=\Omega,~n_s=r,\\ \nonumber
 &n_r=n_\pi,~n_q=n+n_\pi+m.
 \end{eqnarray}
 Next, observe that \(t_d(x,u,\rho)\) from \eqref{dissipativityeq_lpv} can be decomposed in the following manner
\begin{eqnarray}\label{s3:eq4}
&t_d(x,u,\rho)=\pi_d^{\top}Y(\rho)\pi_d,\\\label{s3:eq5} \nonumber
&\pi_d=\begin{bmatrix}
   x^{\T}
   & \pi^{\T} &
   u^{\T}
\end{bmatrix}^{\T},~~Y(\rho)=\sum \limits_{i=1}^{N}  \alpha_i Y_{i},
\end{eqnarray}
where \(Y_{i}\) is a symmetric and linear matrix on all the unkown coefficients of \((Q_i,S_i,R,P)\). In addition, consider
\begin{eqnarray}\label{s3:eq6}
C_d(\rho)=\begin{bmatrix}
   \Upsilon_1(\rho) & \Upsilon_2(\rho) & \Upsilon_3(\rho)
\end{bmatrix}
\end{eqnarray}
as a linear annihilator of \(\pi_d\). Since matrices \(\Upsilon(\rho)\) are affine on the parameter, these matrices can be represented in a polytopic domain, leading to the following representation of \(C_d(\rho)\) 
\begin{eqnarray}\label{s3:eq7}
C_d(\rho)=\sum \limits_{i=1}^{N}  \alpha_i C_{d_i}=\sum \limits_{i=1}^{N}  \alpha_i\begin{bmatrix}
   \Upsilon_{1_i} & \Upsilon_{2_i} & \Upsilon_{3_i}
\end{bmatrix}.
\end{eqnarray}

The following theorem provides a solution for the design of a gain-scheduled SOF that stabilizes LPV systems.

\begin{thm}\label{main:thm}
 Let \(\Omega\) be a polytope of \(\rho(t)\) described by \eqref{eq:polytope} and \(C_d(\rho)\) a linear annihilator of \(\pi_d\) described by \eqref{s3:eq7}. Given a scalar \(\beta\), assume there exist symmetric matrices \(P \succ 0 ,~H_i \succ 0 ,~R \succ 0 ,~Q_{i} , \text{ and matrices } S_{i} \), \(L \in \mathbb{R}^{n_q \times n_\pi}\), such that 
\begin{equation}\label{teo:eq1}
    Y_{i} + LC_{d_i} + C_{d_i}^{\top}L^{\top} \prec 0, 
\end{equation}
\begin{equation} \label{19cond}
\begin{split}
  X_{d_i} + L_s C_{s_i} + C_{s_i}^\top L_s^\top \prec 0
\end{split}
\end{equation} 
\noindent for \(i=1,\ldots,N\), where \(L_s=[\beta \textbf{1} ~ -\text{I}]^\top,~C_{s_i}=[S_i^\top ~ R]\),
\begin{equation*}
    X_{d_i}=\begin{bmatrix}
       Q_i & S_i\\
       S_i^\top & R
    \end{bmatrix},
\end{equation*}
and
\begin{equation}\label{teo:eq2} \nonumber
Y_i=  \begin{bmatrix}
    PA_1+A_1^{\T}P-C_1^{\top}Q_{i}C_1 + H_i &   \star & \star\\
    \left(PA_2-C_1^{\top}Q_{i}C_2\right)^{\T}  & -C_2^{\top}Q_{i}C_2 & \star\\
    \left(PA_3-C_1^{\top}S_{i}\right)^{\T}  & -S_{i}^{\T}C_2 & -R
    \end{bmatrix}.
\end{equation}
Then system \eqref{LPVsystem} is \textit{robust strictly QSR-dissipative} for all \(\rho(t) \in \Omega\) and the gain-scheduled SOF
\begin{eqnarray}\label{teo:eq4}
u=K(\rho)y,~K(\rho)=\sum \limits_{i=1}^{N}  \alpha_i K_i,~K_i=-R^{-1}S_i^{\top},
\end{eqnarray}
\noindent asymptotically stabilizes \eqref{LPVsystem}, for all \(\rho \in \Omega\), around the origin.
\end{thm}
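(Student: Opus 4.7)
The plan is to obtain the conclusion in two applications of Finsler's Lemma: the first translates \eqref{teo:eq1} into the robust strict QSR-dissipativity of the DAR; the second translates \eqref{19cond} into the negativity of the supply rate under the proposed scheduled feedback \(K(\rho)=-R^{-1}S(\rho)^{\top}\). Lyapunov's direct method then closes the argument.

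First I would address \eqref{teo:eq1}. Multiplying by \(\alpha_i\) and summing over \(i=1,\ldots,N\), convexity gives
\begin{equation*}
Y(\rho)+LC_d(\rho)+C_d(\rho)^{\top}L^{\top} \prec 0, \quad \forall \rho \in \Omega,
\end{equation*}
since \(L\) is constant and both \(Y(\rho)\) and \(C_d(\rho)\) admit the polytopic forms \eqref{s3:eq5} and \eqref{s3:eq7}. Applying Lemma~\ref{finslerlemma} with \(Q_d \leftarrow Y\) and \(C_d \leftarrow C_d\), and noting that along any trajectory of \eqref{DAR} the auxiliary vector \(\pi_d\) satisfies \(C_d(\rho)\pi_d=0\), one gets \(\pi_d^{\top}Y(\rho)\pi_d<0\) for all nonzero \(\pi_d\) in the kernel of the annihilator. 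By construction of \(Y_i\), this quadratic form is exactly \(t_d(x,u,\rho)\) in \eqref{dissipativityeq_lpv}, so the DAR (and thus \eqref{LPVsystem}) is robust strictly QSR-dissipative with storage function \(V(x)\) and internal dissipation \(T(x,\rho)\).

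Next I would process \eqref{19cond} in the same convex fashion to obtain \(X_d(\rho)+L_sC_s(\rho)+C_s(\rho)^{\top}L_s^{\top}\prec 0\) for all \(\rho\in\Omega\), where \(X_d(\rho)=\sum_i\alpha_i X_{d_i}\) and \(C_s(\rho)=[S(\rho)^{\top}\ R]\). Applying Finsler's Lemma with annihilator \(C_s(\rho)\) to \(X_d(\rho)\) yields \(z^{\top}X_d(\rho)z<0\) for every \(z=[z_1^{\top}\ z_2^{\top}]^{\top}\neq 0\) obeying \(S(\rho)^{\top}z_1+Rz_2=0\), i.e.\ \(z_2=K(\rho)z_1\). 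Expanding the quadratic form and using \(K(\rho)=-R^{-1}S(\rho)^{\top}\) gives
\begin{equation*}
z_1^{\top}\bigl[Q(\rho)+2S(\rho)K(\rho)+K(\rho)^{\top}RK(\rho)\bigr]z_1 = z_1^{\top}\bigl[Q(\rho)-S(\rho)R^{-1}S(\rho)^{\top}\bigr]z_1<0,
\end{equation*}
so \(Q(\rho)-S(\rho)R^{-1}S(\rho)^{\top}\prec 0\) for all \(\rho\in\Omega\).

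Finally, I would close the loop with \(u=K(\rho)y\) and substitute into \eqref{dissipativityeq_lpv}. The supply rate becomes \(y^{\top}[Q(\rho)-S(\rho)R^{-1}S(\rho)^{\top}]y\le 0\), so \(\dot V(x)\le -T(x,\rho)+y^{\top}[Q(\rho)-S(\rho)R^{-1}S(\rho)^{\top}]y\). Because \(H_i\succ 0\) and \(\alpha_i\ge 0\) with \(\sum\alpha_i=1\), \(T(x,\rho)=x^{\top}H(\rho)x\) is positive definite in \(x\), whence \(\dot V(x)<0\) for all \(x\neq 0\). Together with \(V(x)>0\), Lyapunov's theorem delivers global asymptotic stability of the closed-loop LPV system at the origin for every \(\rho\in\Omega\).

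The main obstacle I anticipate is motivating and exploiting condition \eqref{19cond}: the choice of the constant multiplier \(L_s=[\beta\mathbf{1}\ -\mathrm{I}]^{\top}\) and the annihilator \(C_{s_i}=[S_i^{\top}\ R]\) is what links the free matrix \(S(\rho)\) to the actual feedback gain \(K(\rho)=-R^{-1}S(\rho)^{\top}\) in a convex/polytopic manner; the remaining steps are rather mechanical, essentially algebraic rewriting of the dissipation inequality under the scheduled feedback law.
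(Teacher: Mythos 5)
Your proposal is correct and follows essentially the same route as the paper: convex combination of the vertex LMIs, a first application of Finsler's Lemma with annihilator \(C_d(\rho)\) to get \(t_d(x,u,\rho)<0\) (robust strict QSR-dissipativity), a second application with annihilator \(C_s(\rho)=[S(\rho)^\top~R]\) induced by the feedback \(u=-R^{-1}S(\rho)^\top y\) to force the supply rate to be negative, and finally \(H(\rho)\succ 0\) to conclude \(\dot V<0\). Your explicit reduction of the second step to \(Q(\rho)-S(\rho)R^{-1}S(\rho)^\top\prec 0\) is a harmless elaboration of the paper's argument, not a different method.
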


\begin{proof}
First, consider the satisfaction of condition \eqref{teo:eq1}. Since $\alpha_i\geq0$ and $\sum \limits_{i=1}^{N}  \alpha_i=1$ for $i=1,\dots,N$, note that, by multiplying all the terms of \eqref{teo:eq1} by \( \alpha_i\) and summing them up from \(i=1\) to \(i=N\), we obtain
\begin{equation}\label{eq:proofrho}
\begin{split}
    &\sum \limits_{i=1}^{N}  \alpha_i( Y_i + \text{He}\{LC_{d_i}\} ) = Y(\rho) + \text{He}\{LC_d(\rho)\} \prec 0.
\end{split}
\end{equation}
Since \(C_d(\rho)\) is an annihilator of \(\pi_d\), from Lemma \ref{finslerlemma}, satisfaction of \eqref{eq:proofrho} implies that \(\pi_d^{\T} Y(\rho) \pi_d = t_d(x,u,\rho) < 0\) is also satisfied for all \(\rho \in \Omega\) and all \(\pi_d \neq 0\), where $t_d(x,u,\rho)$ was first defined in \eqref{dissipativityeq_lpv}. Thus, system \eqref{LPVsystem} is \textit{robust strictly QSR-dissipative} for all \(\rho \in \Omega\). In addition, note that as \(H(\rho) \succ 0\), fulfulling 
\begin{equation}\label{eq:2}
    y^{\top}Q(\rho)y+2y^{\top}S(\rho)u+u^{\top}Ru \leq 0
\end{equation}
is sufficient to guarantee $\nabla V^{\top}[A_1x+A_2\pi+A_3u] < 0 $, which ensures the asymptotic stability of system \eqref{LPVsystem} about the origin. Considering a vector \(\zeta=[y^\top ~ u^\top]^\top\), condition \eqref{eq:2} can be rewritten as \(\zeta^\top X_d(\rho) \zeta \leq 0\), where 
\begin{equation} \nonumber
    X_d(\rho)= \begin{bmatrix}
       Q(\rho) & S(\rho)\\
       S^\top(\rho) & R
    \end{bmatrix}.
\end{equation}
Let us recall that, we consider the gain-scheduled static output feedback given by
 \begin{equation}
  \begin{gathered}\label{controllaw}
 u=K(\rho)y=-R^{-1}\sum \limits_{i=1}^{N}  \alpha_i S_i^{\top}y=-R^{-1}S^{\top}(\rho)y.
 \end{gathered}
\end{equation}
By noting that \(C_s(\rho) \zeta=0\), with \(C_s(\rho)=[S^\top(\rho)~R]\), Lemma \ref{finslerlemma} can be applied. If there exists matrix \(L_s\) such that
\begin{equation}\label{condstability}
\begin{split}
    X_d(\rho) + \text{He}\{L_s C_s(\rho)\} \prec 0,
\end{split}
\end{equation}
then \(\zeta^\top X_d(\rho) \zeta < 0\) for all \(\rho \in \Omega\) and \(\zeta \neq 0\), thus condition \eqref{eq:2} is also satisfied, ensuring $\nabla V^{\top}[A_1x+A_2\pi+A_3u] < 0 $. Note that by multiplying all the terms of \eqref{19cond} by \( \alpha_i\) and summing them up from \(i=1\) to \(i=N\), we obtain
\begin{equation}
    \sum \limits_{i=1}^{N}  \alpha_i(X_{d_i}+ \text{He}\{L_s C_{s_i}\} ) = X_d(\rho) + \text{He}\{L_s C_s(\rho)\} \prec 0.
\end{equation}
Therefore, satisfaction of \eqref{19cond} implies fulfillment of \eqref{condstability} and that the system is stabilized by the SOF gain-scheduling given by \eqref{teo:eq4}, which completes the proof of Theorem \ref{main:thm}.
\end{proof}

\section{$\mathcal{L}_2$-gain Performance}
In this section, we present an extension of the proposed GS-SOF design procedure to the case of \(\mathcal{L}_2\)-gain performance when the system is affected by external disturbances.  The approach presented here is inspired by the framework proposed in \cite{cao1998static} for stabilization of an LTI system with \(\mathcal{L}_2\)-gain performance.

First, consider an LPV system of the form
\begin{align} \label{sys_disturbance}
    \begin{cases}
\dot{x}(t)=A(\rho)x(t)+B (\rho)u(t)+B_w (\rho)w(t),\\
z(t)=A_z(\rho)x(t)+B_z (\rho)u(t)+D_z (\rho)w(t),\\
y(t)=C(\rho)x(t)+D(\rho)w(t),    
    \end{cases}
\end{align}
that is the same system \eqref{LPVsystem} with an additional external input \(w(t) \in \mathbb{R}^q\) and a controlled output \(z(t) \in \mathbb{R}^l\). As in \eqref{LPVsystem}, all system matrices can present rational or polynomial dependence on \(\rho\). This system in its DAR form is presented below
\begin{align}\label{dar_w}
    \begin{cases}
\dot{x}=A_1x + A_2 \pi + A_3u + A_4 w,\\
z=B_1x + B_2 \pi + B_3u + B_4 w,\\
y=C_1x+C_2\pi+C_3w,\\
0=\Upsilon_1(\rho)x+\Upsilon_2(\rho)\pi+\Upsilon_3(\rho)u+\Upsilon_4(\rho)w,    
    \end{cases}
\end{align}
where matrices \(B_i\) are also constant matrices. Considering \(u=K(\rho)y\), the closed loop form of this system is given by
\begin{equation} \label{closedloop}
    \begin{cases}
\dot{x}=\mathscr{A}_1x + \mathscr{A}_{2} \pi + \mathscr{A}_{3} w,\\
z=\mathscr{B}_{1}x + \mathscr{B}_{2} \pi + \mathscr{B}_{3}w,\\
0=\hat{\Upsilon}_{1}(\rho)x+\hat{\Upsilon}_{2}(\rho)\pi+\hat{\Upsilon}_{3}(\rho)w,    
    \end{cases}
\end{equation}
where
\begin{equation}\label{rel_closed}
    \begin{split}
&\mathscr{A}_{1}=(A_1 + A_3K(\rho)C_1),~\mathscr{A}_{2}=(A_2 +A_3K(\rho)C_2),\\
&\mathscr{A}_{3}=(A_4 +A_3K(\rho)C_3),~\mathscr{B}_{1}=(B_1 + B_3K(\rho)C_1),\\
&\mathscr{B}_{2}=(B_2 + B_3K(\rho)C_2),~\mathscr{B}_{3}=(B_4 + B_3K(\rho)C_3),\\
&\hat{\Upsilon}_{1}=(\Upsilon_1+ \Upsilon_3K(\rho)C_1),~\hat{\Upsilon}_{2}=(\Upsilon_2+ \Upsilon_3K(\rho)C_2),\\
&\hat{\Upsilon}_{3}=(\Upsilon_4+ \Upsilon_3K(\rho)C_3).
    \end{split}
\end{equation}

The gain-scheduled static output feedback control problem with \(\mathcal{L}_2\)-gain performance is equivalent to finding a control law \(u(t)=K(\rho(t))y(t)\) such that the closed loop \eqref{closedloop} is asymptotically stable in the absence of disturbance \(w\) and the \(\mathcal{L}_2\) norm of \(z\) is bounded such that
\begin{equation} \label{L2_rel}
   \|z\|_2 \leq \gamma \|w\|_2 + \theta,
\end{equation}
with positive scalars \(\gamma\) and \(\theta\), where \(\theta\) is a bias term. When \eqref{L2_rel} is ensured, one can say that the system \eqref{closedloop} is input to output stable with \(\mathcal{L}_2\)-gain bounded by \(\gamma\). In order to guarantee asymptotic stability at the same time satisfying relation \eqref{L2_rel}, we have the following sufficient condition \cite{coutinho2008}
\begin{equation}\label{L2gain}
    \dot{V}+\gamma^{-1}z^\top z - \gamma w^\top w < 0,
\end{equation}
with function \(V\) defined in \eqref{Def_V}. Note that by integrating both sides of \eqref{L2gain}, taking squares roots, and using the fact that \(\sqrt{a+b} \leq a+b\), for \(a,b \in \mathbb{R}^+\), one arrives at \( \|z\|_2 \leq \gamma \|w\|_2 + \sqrt{\gamma V(x(0))}\), i.e., \eqref{L2_rel} with bias term \(\theta=\sqrt{\gamma V(x(0))}\).
\begin{thm}
 If there exists a scalar \(\gamma>0\), such that conditions \eqref{teo:eq1} and \eqref{19cond} of Theorem \ref{main:thm} hold replacing matrices \((P,A_1,A_2,A_3,C_1,C_2,\Upsilon_1,\Upsilon_2,\Upsilon_3,L,H_i)\) by \((\ov{P},\ov{A}_1,\ov{A}_2,\ov{A}_3,\ov{C}_1,\ov{C}_2,\ov{\Upsilon}_1,\ov{\Upsilon}_2,\ov{\Upsilon}_3,\ov{L},\ov{H}_i)\), respectively,  where 

\begin{equation} \label{barmatrices}
\begin{split}
 &\ov{P}=\begin{bmatrix}
       P & 0 & 0\\
       0 & \text{I}_q & 0\\
       0 & 0 & \text{I}_l
    \end{bmatrix},~\ov{A}_1=\begin{bmatrix}
   A_1 & A_4 & 0_{n \times l}\\
   0_{l \times n} & -\frac{\gamma}{2}\text{I}_{l \times q} & 0_{l \times l}\\
    B_1 & B_4 & -\frac{\gamma}{2}\text{I}_{l}
\end{bmatrix},\\
&\ov{A}_2=\begin{bmatrix}
   A_2 \\
   0_{q \times n_\pi}\\
   B_2
\end{bmatrix},~\ov{A}_3= \begin{bmatrix}
   A_3 \\
   0_{q \times m}\\
   B_3
\end{bmatrix},~\ov{\Upsilon}_1^\top=\begin{bmatrix}
   \Upsilon_1 \\ \Upsilon_4 \\ 0_{n_\pi \times l}
\end{bmatrix}\\
&\ov{C}_2=C_2,~\ov{C}_1=\begin{bmatrix}
   C_1 & C_3 & 0_{r \times l}
\end{bmatrix},,~\ov{\Upsilon}_2=\Upsilon_2,\\
&\ov{\Upsilon}_3=\Upsilon_3,~\ov{H}_i \in \mathbb{R}^{n_l \times n_l},~\ov{L} \in \mathbb{R}^{(n_l+n_\pi+m) \times n_\pi},
\end{split}
\end{equation}
 with \(n_l=n+q+l\), then system \eqref{sys_disturbance} is \textit{robust strictly QSR-dissipative} for all \(\rho(t) \in \Omega\), and the gain-scheduled SOF
 \begin{equation}\label{blablala}
     u=K(\rho)y,~K(\rho)=\sum \limits_{i=1}^{N}  \alpha_i K_i,~K_i=R^{-1}S_i^\top,
 \end{equation}
 asymptotically stabilizes system \eqref{sys_disturbance} for all \(\rho(t) \in \Omega\) with \(\mathcal{L}_2\)-gain bounded by \(\gamma\).
\end{thm}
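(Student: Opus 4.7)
The plan is to reduce the statement to a direct application of Theorem~\ref{main:thm}. Introducing the augmented state $\bar{x} = [x^\top,~w^\top,~z^\top]^\top \in \mathbb{R}^{n_l}$, I would read the barred matrices in \eqref{barmatrices} as the data of a fresh DAR of the form \eqref{DAR}: augmented dynamics $\dot{\bar{x}} = \bar{A}_1 \bar{x} + \bar{A}_2 \pi + \bar{A}_3 u$, augmented measured output $\bar{y} = \bar{C}_1 \bar{x} + \bar{C}_2 \pi = C_1 x + C_3 w + C_2 \pi$ (which coincides with the output of \eqref{sys_disturbance}), and augmented linear annihilator $\bar{C}_d(\rho) = [\bar{\Upsilon}_1(\rho),~\bar{\Upsilon}_2(\rho),~\bar{\Upsilon}_3(\rho)]$ encoding the original algebraic constraint $\Upsilon_1 x + \Upsilon_4 w + \Upsilon_2 \pi + \Upsilon_3 u = 0$. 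With this reading, the two hypotheses of the present theorem are literally the LMIs \eqref{teo:eq1} and \eqref{19cond} of Theorem~\ref{main:thm} applied to the augmented system.

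A direct appeal to Theorem~\ref{main:thm} then yields two conclusions. First, robust strict QSR-dissipativity of the augmented system with quadratic storage $\bar{V}(\bar{x}) = \bar{x}^\top \bar{P} \bar{x} = x^\top P x + w^\top w + z^\top z$ and positive slack $\bar{x}^\top \bar{H}(\rho) \bar{x}$. Second, the supply-rate inequality $\bar{y}^\top Q(\rho) \bar{y} + 2 \bar{y}^\top S(\rho) u + u^\top R u \leq 0$ under the gain-scheduled SOF \eqref{blablala}, by the same Finsler argument that produces \eqref{19cond} in the proof of Theorem~\ref{main:thm}. Adding these two gives $\dot{\bar V} + \bar{x}^\top \bar{H}(\rho) \bar{x} \leq 0$ along closed-loop trajectories, and hence $\dot{\bar V} < 0$.

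The pivotal step is then to translate this back into the original variables. Using the block-diagonal structure of $\bar{P}$ and the block-row structure of $(\bar{A}_1,\bar{A}_2,\bar{A}_3)$, I would expand $\dot{\bar V}$ row by row: the $x$-block recovers $\dot V$; the $w$-block, via the $-\tfrac{\gamma}{2} I$ diagonal entry of $\bar{A}_1$, contributes $-\gamma w^\top w$; and the $z$-block, after substituting the identity $z = B_1 x + B_2 \pi + B_3 u + B_4 w$ from \eqref{dar_w}, contributes a purely quadratic $z$-term. The $-\tfrac{\gamma}{2} I$ entry in the $(3,3)$ block of $\bar{A}_1$ together with the unit weighting of $z$ in $\bar{P}$ are designed so that these three pieces reassemble into $\dot{V} + \gamma^{-1} z^\top z - \gamma w^\top w$, i.e., the $\mathcal{L}_2$-gain differential inequality \eqref{L2gain}. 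Integrating \eqref{L2gain} from $0$ to $t$ produces $V(x(t)) - V(x(0)) + \gamma^{-1}\|z\|_2^2 \leq \gamma \|w\|_2^2$; dropping the nonnegative $V(x(t))$, taking square roots, and applying $\sqrt{a+b}\leq \sqrt{a}+\sqrt{b}$ for $a,b \geq 0$ gives $\|z\|_2 \leq \gamma \|w\|_2 + \sqrt{\gamma V(x(0))}$, which is precisely \eqref{L2_rel} with $\theta = \sqrt{\gamma V(x(0))}$. Asymptotic stability when $w \equiv 0$ follows from $\dot V \leq -\gamma^{-1} z^\top z \leq 0$ together with $\bar{H}(\rho) \succ 0$.

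The step I expect to be the main obstacle is the bookkeeping just described: one must verify that the specific scalar entries $-\tfrac{\gamma}{2}$ in $\bar{A}_1$, paired with the unit weights of $w$ and $z$ in $\bar{P}$ and the substitution of the algebraic identity defining $z$, collapse the augmented dissipativity certificate precisely onto the target $\mathcal{L}_2$-gain inequality. This is the heart of the reduction, while everything else—feasibility of the LMIs implying the augmented dissipativity, non-positivity of the supply rate under the feedback $u = -R^{-1} S^\top(\rho) y$, and the final integration producing the norm bound—is either a direct invocation of Theorem~\ref{main:thm} or a routine calculation.
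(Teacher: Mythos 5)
Your high-level strategy --- read the barred matrices as the data of an augmented DAR and invoke Theorem~\ref{main:thm} --- is the same as the paper's, but the step you yourself flag as the ``main obstacle'' is where the argument breaks, and your proposed resolution of it does not work. The third block of the augmented vector is \emph{not} the signal $z$ with its defining identity substituted in; it is a free slack variable that must be eliminated by a Schur complement. Concretely, $\mathrm{He}\{\ov{P}\,\ov{A}_1\}$ has $-\gamma \mathrm{I}$ in its $(3,3)$ block and $B_1, B_4$ (and, via $\ov{A}_2,\ov{A}_3$, also $B_2, B_3$) coupling the third block to $(x,w,\pi,u)$. If you restrict the augmented quadratic form to the affine subspace where the third block equals $z = B_1x+B_2\pi+B_3u+B_4w$, the $z$-terms collect into $2z^\top z - \gamma z^\top z = (2-\gamma)z^\top z$, not $\gamma^{-1}z^\top z$. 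For large $\gamma$ this is a very negative term, so negativity of the restricted form says essentially nothing about $\|z\|$ and does not yield \eqref{L2gain}. The correct mechanism --- and what the paper does --- is to use negative definiteness over \emph{all} values of the slack variable and take the Schur complement with respect to the $-\gamma\mathrm{I}$ block, which is exactly what regenerates the $\gamma^{-1}\mathscr{B}_i^\top\mathscr{B}_j$ terms of $Y_w$ and hence the $\gamma^{-1}z^\top z$ in \eqref{L2gain}. (The paper runs this in the other direction: it starts from \eqref{L2gain} written as $\pi_w^\top Y_w\pi_w<0$, applies Finsler's Lemma, then Schur complement plus a congruence transformation to arrive at the linearized bar-matrix form \eqref{proof2:eq8}.)

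A related conceptual slip: you treat $\bar{V}(\bar{x})=x^\top Px+w^\top w+z^\top z$ as a storage function differentiated along augmented ``dynamics'' $\dot{\bar{x}}=\ov{A}_1\bar{x}+\cdots$. But $w$ is an exogenous input and $z$ an output; their time derivatives are not governed by $\ov{A}_1$ (nothing forces $\dot{w}=-\tfrac{\gamma}{2}w$), so $\dot{\bar{V}}$ along actual trajectories is not the quadratic form certified by the LMIs. The augmented matrices are a purely formal device whose associated quadratic form coincides with $\dot{V}+\gamma^{-1}z^\top z-\gamma w^\top w$ only after the Schur-complement elimination just described. Your final integration step (yielding $\theta=\sqrt{\gamma V(x(0))}$) and the transfer of the supply-rate negativity under $u=K(\rho)y$ are fine, but the proof needs the Schur complement/congruence argument inserted where you currently substitute the identity for $z$.
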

\begin{proof}
First, consider system \eqref{LPVsystem} in its DAR form \eqref{DAR}. Considering \(u=K(\rho)y\), we have the Lyapunov condition \(\dot{V}(x) < 0\) that guarantees asymptotic stability for the closed-loop system \eqref{DAR}, which is  equivalently expressed as
\begin{equation} \label{proof2:eq1}
\begin{bmatrix}
       x\\
       \pi
\end{bmatrix}^\top
\begin{bmatrix}
       He\{PA_1 + PA_3K(\rho)C_1\}  &  & \star\\
       A_2^\top P + C_2^\top K^\top(\rho) A_3^\top P  &  & 0
\end{bmatrix} \begin{bmatrix}
       x\\
       \pi
    \end{bmatrix} < 0.
\end{equation} 
Since \(u=K(\rho)y\) and also because of \eqref{DAR}, matrix \([\Upsilon_1 + \Upsilon_3K(\rho)C_1~~~\Upsilon_2 + \Upsilon_3K(\rho)C_2]\) is an annihilator of \([x^\T~~\pi^\T]^\T\). Thus Lemma \ref{finslerlemma} can be applied. If there exists matrix \(L_a\) such that
\begin{equation}  \label{proof2:eq2}
\begin{split}
&He\left\{\begin{bmatrix}
   PA_1  & PA_2  \\
   0  &  0
\end{bmatrix}
\right\}+He\left\{\begin{bmatrix}
   PA_3KC_1  & PA_3KC_2\\
   0  &  0
\end{bmatrix} 
\right\}\\
&+He\left\{L_a
\begin{bmatrix}
  \Upsilon_1 &  \Upsilon_2  
\end{bmatrix}
\right\}+He\left\{L_a\Upsilon_3 K
\begin{bmatrix}
   C_1 &  C_2
\end{bmatrix}
\right\} \prec 0,
\end{split}
\end{equation}
then \eqref{proof2:eq1} is satisfied for all \(\rho(t) \in \Omega\) and \([x^\top~ \pi^\top]^\top \neq 0\).

On the other hand, asymptotic stability of system \eqref{closedloop} with \(\mathcal{L}_2\)-gain performance is guaranteed fulfilling \eqref{L2gain}, which is equivalent to \( \pi_w^\top Y_w \pi_w < 0\), where \(\pi_w=[x^\top~w^\top~\pi^\top]^\top\) and \(Y_w\) is given by
\begin{equation*} \label{proof2:eq5}
\left[
\setlength\arraycolsep{2pt}
\begin{array}{ccc}
    He\{P\mathscr{A}_1\} + \gamma^{-1} \mathscr{B}_1^\top \mathscr{B}_1 & \star & \star \\
     \mathscr{A}_3^\top P + \gamma^{-1} \mathscr{B}_3^\top \mathscr{B}_3 & \gamma^{-1} \mathscr{B}_3^\top \mathscr{B}_3 - \gamma \text{I}  & \star\\
    \mathscr{A}_2^\top P + \gamma^{-1} \mathscr{B}_2^\top \mathscr{B}_2  & \gamma^{-1} \mathscr{B}_2^\top  \mathscr{B}_3  & \gamma^{-1} \mathscr{B}_2^\top \mathscr{B}_2 
\end{array}
\right].
\end{equation*}
By noting from \eqref{closedloop} that \( \hat{\Upsilon}_w=[  \hat{\Upsilon}_1 ~ \hat{\Upsilon}_3 ~\hat{\Upsilon}_2]\) is an annihilator of \(\pi_w\), Lemma \ref{finslerlemma} can also be applied. If there exists a matrix \(L_w=[L_1^\top~L_3^\top~L_2^\top]^\top\) such that
\begin{equation} \label{proof2:eq6}
Y_w + L_w \hat{\Upsilon}_w + \hat{\Upsilon}_w^\top L_w^\top \prec  0,
\end{equation}
then \( \pi_w^\top Y_w \pi_w < 0\) is satisfied for all \(\rho(t) \in \Omega\) and \(\pi_w \neq 0\).
Next, applying Schur complement in \eqref{proof2:eq6}
followed by a congruence transformation with \(diag(I_2,J_2)\), we obtain 
\begin{equation} \label{proof2:eq7}
\begin{split}
&\begin{bmatrix}
He\{P\mathscr{A}_1\}  & * & * & *\\
\mathscr{A}_3^\top P & - \gamma \text{I}  & * & *\\
\mathscr{B}_1 & \mathscr{B}_3  & -\gamma \text{I} & *\\
\mathscr{A}_2^\top P  & 0 & \mathscr{B}_2^\top & 0
\end{bmatrix}+He\left\{L_b\ov{\Upsilon}_w\right\}  \prec  0,
\end{split}
\end{equation}
where \(L_b=[L_1^\top ~  L_3^\top ~ 0 ~  L_2^\top]^\top\) and \(\ov{\Upsilon}_w=\begin{bmatrix}
 \hat{\Upsilon}_1 &  \hat{\Upsilon}_3 & 0 &  \hat{\Upsilon}_2
\end{bmatrix}\). By taking into account the definitions of matrices \(\mathscr{A}_1,\mathscr{A}_2,\mathscr{A}_3,\mathscr{B}_1,\mathscr{B}_2,\mathscr{B}_3,\hat{\Upsilon}_1,\hat{\Upsilon}_2,\hat{\Upsilon}_3\) in \eqref{rel_closed}, the following equivalent expression for \eqref{proof2:eq7} is obtained in terms of the matrices \(\ov{P},\ov{A}_1,\ov{A}_2,\ov{A}_3,\ov{C}_1,\ov{C}_2,\ov{\Upsilon}_1,\ov{\Upsilon}_2,\ov{\Upsilon}_3\) given in \eqref{barmatrices}\footnote{Dependence on \(\rho\) was omitted for simplicity of notation.} 
\begin{equation}  \label{proof2:eq8}
\begin{split}
&He\left\{\begin{bmatrix}
  \ov{P}~\ov{A}_1  & \ov{P}~\ov{A}_2  \\
  0     &  0
\end{bmatrix}
\right\}+He\left\{\begin{bmatrix}
  \ov{P}~\ov{A}_3K\ov{C}_1  &   \ov{P}~\ov{A}_3K\ov{C}_2\\
  0  &  0
\end{bmatrix} 
\right\}\\
&+He\left\{L_b~
\begin{bmatrix}
  \ov{\Upsilon}_1 &  \ov{\Upsilon}_2  
\end{bmatrix}
\right\}+He\left\{L_b~\ov{\Upsilon}_3 K
\begin{bmatrix}
  \ov{C}_1 &  \ov{C}_2
\end{bmatrix}
\right\}  \prec 0.
\end{split}
\end{equation}
Note that condition \eqref{proof2:eq8} has the same form of condition \eqref{proof2:eq2}. Thus, by applying Theorem \ref{main:thm} with the bar matrices, one ensures satisfaction of  \( \pi_w^\top Y_w \pi_w < 0\), \( \forall \pi_w \in \mathbb{R}^{n+q+n_{\pi}}:\hat{\Upsilon}_w \pi_w=0\), and for all \(\rho(t) \in \Omega\) with the designed SOF gain-scheduled control \eqref{blablala}, which in turn guarantees \eqref{L2gain} along the trajectories of the closed-loop perturbed system \eqref{closedloop} with \(\mathcal{L}_2\)-gain bounded by \(\gamma\). \end{proof}

\subsection{Optimization Problem}
To design the GS-SOF that stabilizes system \eqref{sys_disturbance} while minimizing the \(\mathcal{L}_2\)-gain, the following optimization problem applies.
\begin{equation} \label{optm}
    \begin{split}
        &\text{minimize } \gamma \\
        &\text{subject to } \gamma > 0,~P \succ 0,\\
        &\text{and } \ov{H}_i \succ 0,~\eqref{teo:eq1},~\eqref{19cond} \text{ for } i=1,\ldots,N,
    \end{split}
\end{equation}
where conditions \eqref{teo:eq1} and \eqref{19cond} are applied as in Theorem 2.
\section{Numerical Examples}
This section presents numerical examples to illustrate the effectiveness of the proposed design method. For the
implementation of the design conditions in the theorems, we use conventional SDP tools provided by \cite{lofberg2004yalmip} and \cite{sturm1999using}.
\subsection{Example 1}
Consider the open-loop unstable system from \cite{bouali2008} with an additional external input \(w\) and an adapted output \(y\). 
\begin{equation}\label{ex1:eq1}
    \begin{cases}
&\dot{x}=
\begin{bmatrix}
  \dfrac{\rho^2 +\rho}{\rho+2} & \dfrac{3\rho +4}{\rho+2}\\
  1 & -1
\end{bmatrix}x+
\begin{bmatrix}
  2 \\  1
\end{bmatrix}u + \begin{bmatrix}
  1 \\  0
\end{bmatrix}w ,\\
&z=\begin{bmatrix} 
  1 & 0
\end{bmatrix}x + u,\\
&y= 
\begin{bmatrix} 
  1 & 0
\end{bmatrix}x + w,  
    \end{cases}
\end{equation}
where \(\rho(t) \in [-1.5~~~ 1.5]\). A DAR of system \eqref{ex1:eq1} is given by
\begin{eqnarray}\nonumber
 &\pi=
\begin{bmatrix}
  \dfrac{x_1}{\rho+2}~~\dfrac{\rho x_1}{\rho+2}~~\dfrac{\rho^2 x_1}{\rho+2}~~\dfrac{x_2}{\rho+2}~~\dfrac{\rho x_2}{\rho+2}~~\dfrac{\rho^2 x_2}{\rho+2}
\end{bmatrix}^{\top}, \\ \nonumber
&A_1=
\begin{bmatrix}
    0 & 0\\
  1 & -1 \\
\end{bmatrix},A_2=
\begin{bmatrix}
     0 & 1 & 1 & 4 & 3 & 0 \\
 0 & 0 & 0 & 0 & 0 & 0 
\end{bmatrix},A_3=
\begin{bmatrix}
  2 \\
  1
\end{bmatrix}\\\nonumber
 &\left[\def\arraystretch{1.3} \begin{array}{c}
         \Upsilon_3^{\T}
         \\ \hline
         \Upsilon_4^{\T}
    \end{array}\right] =  \left[\setlength\arraycolsep{2pt}  \begin{array}{cccccc}
    0 & 0 & 0 & 0 & 0 & 0 \\ \hline
    0 & 0 & 0 & 0 & 0 & 0 
    \end{array}\right],\left[\begin{array}{c|c} B_2 & B_3 \\ \hline
B_4 & C_3
\end{array}\right] = \left[\begin{array}{c|c} 0 & 1 \\ \hline
0 & 1
\end{array}\right],  \\\nonumber
 &A_4^\top=B_1=C_1=\begin{bmatrix}
  1 &  0
\end{bmatrix},\Upsilon_2=
\text{diag}\left(\Upsilon_{d},\Upsilon_{d}\right),\\ \nonumber
    &\left[\def\arraystretch{1.3} \begin{array}{c}
         C_2  \\ \hline
         \Upsilon_1^{\T} \\ 
    \end{array}\right] =  \left[\setlength\arraycolsep{2pt}  \begin{array}{cccccc}
    0 & 0 & 0 & 0 & 0 & 0  \\ \hline
    1 & 0 & 0 & 0 & 0 & 0 \\
    0 & 0 & 0 & 1 & 0 & 0 \\ 
    \end{array}\right],\Upsilon_{d} = \begin{bmatrix}
  -(\rho+2) & 0 & 0\\
  -\rho & 1 & 0 \\
  0 & -\rho & 1 
\end{bmatrix}.
\end{eqnarray}
By applying the optimization problem \eqref{optm}, with \(\beta=-1.3\), we obtain the gain-scheduled SOF \eqref{blablala}, with matrices \(K_1=-1.2669\) and \(K_2=-1.3145\) that guarantees closed-loop stability with \(\mathcal{L}_2\)-gain bounded by \(\gamma=1.3493\).
\begin{figure}[t!]
    \centering
    \includegraphics[width=8cm]{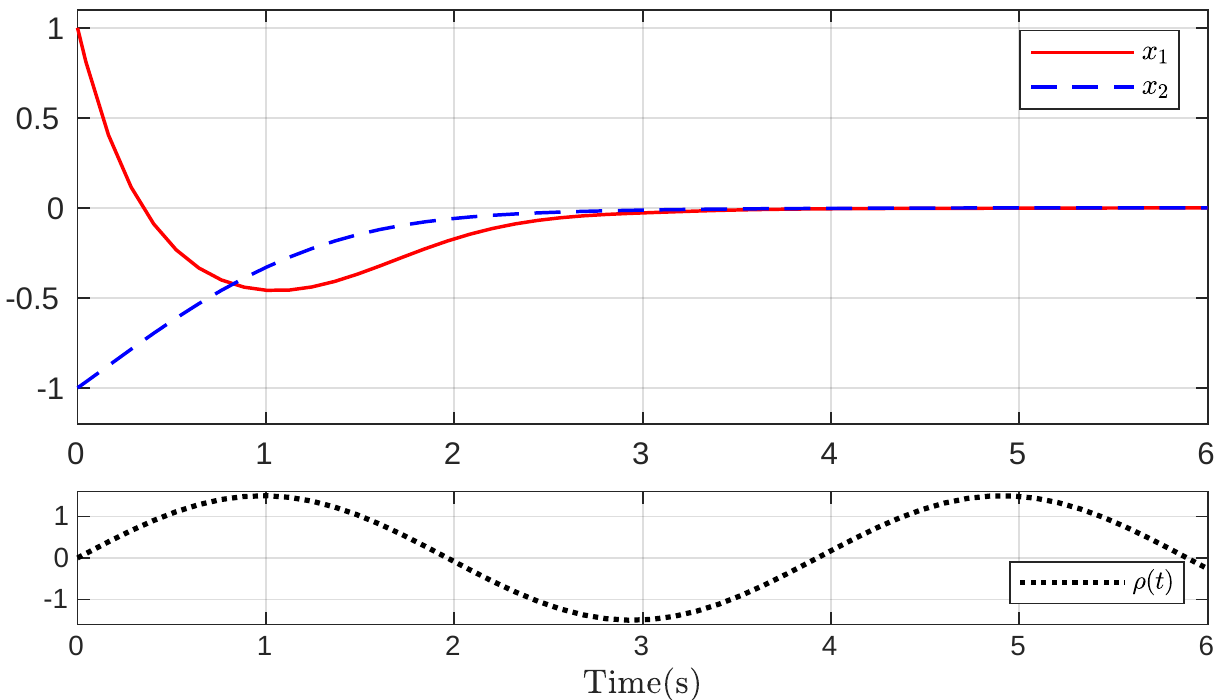}
    \caption{Example 1: States trajectories of the closed-loop system.}
    \label{fig:ex1}
\end{figure}
For simulations, we consider \(\rho(t)=1.5sin(1.6t)\), which can be rewritten as the convex combination of its vertices, such as \( \rho=-1.5\alpha_1 + 1.5 \alpha_2,\) with \(\alpha_1+\alpha_2=1\), leading to \(\alpha_2=1-\alpha_1\), \(\alpha_1=0.5-0.5sin(1.6t)\). Figure \ref{fig:ex1} presents simulation results of the closed-loop system for initial conditions \(x(0)=[1~~-1]^\top\).

\subsection{Example 2}
Consider an LPV system as follows,
\begin{equation}\label{ex2:eq1}
    \begin{cases}
&\dot{x}=
\begin{bmatrix}
  1+ \rho &  2-3\rho\\
  0 & -4-\rho
\end{bmatrix}x+
\begin{bmatrix}
  1 \\  \rho
\end{bmatrix}u + \begin{bmatrix}
  2- \rho \\  1
\end{bmatrix}w ,\\
&z=\begin{bmatrix} 
  1 & 2-\rho
\end{bmatrix}x + (1+\rho)u,\\
&y= 
\begin{bmatrix} 
  1+\rho & \rho
  \end{bmatrix}x.    
    \end{cases}
\end{equation}
with \(\rho \in [0~~~ 1]\). Replacing the limits of \(\rho\) in the system, we obtain the same two vertices system of Example 2 from \cite{behrouz2021robust}. Consider a DAR of system \eqref{ex2:eq1} with
\begin{eqnarray}\nonumber
&\pi=
\begin{bmatrix}
\rho x_1 & \rho x_2 & \rho u & \rho w
\end{bmatrix}^{\top},~B_3=1,~B_4=C_3=0, \\\nonumber
&A_1=
\begin{bmatrix}
  1 & 2\\
  0 & -4 \\
\end{bmatrix}, ~A_2=
\begin{bmatrix}
1 & -3 & 0 & -1\\
 0 & -1 & 1 & 0 
\end{bmatrix},~A_3=
\begin{bmatrix}
  1 \\
  0
\end{bmatrix},\\ \nonumber
&A_4=
\begin{bmatrix}
  0 \\
  1
\end{bmatrix},~B_1=\begin{bmatrix}
  1 \\
  2
\end{bmatrix}^{\top},\Upsilon_1^{\T} =  \left[\setlength\arraycolsep{2.5pt}  \begin{array}{cccccc}
    \rho & 0 & 0 & 0 \\
    0 & \rho & 0 & 0 \\ 
    \end{array}\right],\\\nonumber
 &B_2=\begin{bmatrix}
    0 & -1 & 1 & 0 
\end{bmatrix},~C_1^\top=A_3,~C_2=\begin{bmatrix}
  1 &  1 & 0 & 0
\end{bmatrix}, \\\nonumber
&\Upsilon_2=-I_4,~\Upsilon_3^{\T}=\begin{bmatrix}
  0 & 0 & \rho & 0
\end{bmatrix},~\Upsilon_4^{\T}=\begin{bmatrix}
  0 & 0 & 0 & \rho
\end{bmatrix}.
\end{eqnarray}
By applying the optimization problem \eqref{optm} with \(\beta=-29.3\), we obtain the gain-scheduled SOF \eqref{blablala}, with matrices \(K_1=-29.0522 ,~K_2=-29.2994\), that guarantees closed-loop stability with \(\mathcal{L}_2\)-gain bounded by \(\gamma= 5.2637\). Implementation of the control law is straightforward by following the same procedure illustrated in Example 1.

\section{Conclusion}

This paper proposed a new strategy based on strict QSR-dissipativity for gain-scheduled SOF stabilization of LPV systems with \(\mathcal{L}_2\)-gain performance. Finsler's Lemma and linear annihilators have been applied to formulate polytopic LMI conditions for dissipativity analysis and gain-scheduled SOF design. We successfully applied the strategy in two numeral examples. The first presents rational system matrices and the second presents affine system matrices on the time-varying parameter, both being open-loop unstable. The main contribution of this paper consists that the system matrices can present polynomial or rational dependence, not only affine as it is common in the field, and no restriction on the output plant matrix is considered. In addition, differently from some strategies in the field, the formulated solution does not need to solve a static feedback problem as initial stage to design the gain-scheduled static output feedback. 

\bibliographystyle{IEEEtran}
\bibliography{refs}

\end{document}